\documentclass[11pt,reqno,a4paper]{amsart}

\usepackage[T1]{fontenc}
\usepackage[utf8]{inputenc}

\usepackage{libertinus}
\usepackage{microtype}
\usepackage{geometry}
\usepackage{amsmath,amssymb,amsthm,amsfonts}
\usepackage{mathtools}
\usepackage{mathrsfs}
\usepackage{bbm}

\numberwithin{equation}{section}

\usepackage{graphicx}
\usepackage{xcolor}
\usepackage{enumitem}
\setlist{itemsep=2pt,topsep=4pt}

\usepackage[
  colorlinks=true,
  linkcolor=blue,
  citecolor=blue,
  urlcolor=blue,
  pagebackref=true
]{hyperref}

\renewcommand*{\backrefalt}[4]{%
  \ifcase #1\relax
  \or
    {\footnotesize Cited on page #2.}%
  \else
    {\footnotesize Cited on pages #2.}%
  \fi
}

\usepackage[nameinlink,noabbrev]{cleveref}
\crefname{section}{Section}{Sections}
\Crefname{section}{Section}{Sections}
\crefname{subsection}{Subsection}{Subsections}
\Crefname{subsection}{Subsection}{Subsections}

\usepackage{titlesec}

\titleformat{\section}
  {\normalfont\large\bfseries\centering}
  {\thesection.}
  {0.75em}
  {}

\titlespacing*{\section}
  {0pt}
  {3.2ex plus 1ex minus .2ex}
  {1.6ex plus .3ex}

\titleformat{\subsection}
  {\normalfont\normalsize\bfseries}
  {\thesubsection.}
  {0.75em}
  {}

\titlespacing*{\subsection}
  {0pt}
  {2.4ex plus .8ex minus .2ex}
  {1.0ex plus .2ex}

\titleformat{\subsubsection}
  {\normalfont\normalsize\bfseries\itshape}
  {\thesubsubsection.}
  {0.75em}
  {}

\titlespacing*{\subsubsection}
  {0pt}
  {2.0ex plus .6ex minus .2ex}
  {0.7ex plus .2ex}

\usepackage{aliascnt}
\usepackage{etoolbox}

\newtheoremstyle{compact}
  {6pt}
  {1pt}
  {\itshape}
  {}
  {\bfseries}
  {.}
  {0.5em}
  {}

\theoremstyle{compact}
\newtheorem{theorem}{Theorem}[section]

\newaliascnt{proposition}{theorem}
\newtheorem{proposition}[proposition]{Proposition}
\aliascntresetthe{proposition}

\newaliascnt{lemma}{theorem}
\newtheorem{lemma}[lemma]{Lemma}
\aliascntresetthe{lemma}

\newaliascnt{corollary}{theorem}
\newtheorem{corollary}[corollary]{Corollary}
\aliascntresetthe{corollary}

\newtheorem{maintheorem}{Theorem}

\theoremstyle{definition}
\newaliascnt{definition}{theorem}

\aliascntresetthe{definition}

\newtheorem*{example}{Example}
\newtheorem*{fouriercondition}{Fourier condition \textup{(F)}}

\theoremstyle{definition}
\newtheorem{remark}[theorem]{Remark}

\AtBeginEnvironment{proof}{\vspace{6pt}}

\makeatletter
\AtBeginEnvironment{thebibliography}{%
  \def\@mklab#1{#1\hfil}%
}
\makeatother

\newcommand{\bR}{\mathbb{R}}
\newcommand{\bC}{\mathbb{C}}

\newcommand{\E}{\mathbb{E}}
\newcommand{\Pp}{\mathbb{P}}
\newcommand{\Var}{\operatorname{Var}}

\newcommand{\ind}{\mathbbm{1}}

\newcommand{\dto}{\xrightarrow{\,d\,}}

\begin{document}

\title{Randomized Spectral Inference for Hyperuniformity}

\author{Michael Bj\"orklund}
\address{Department of Mathematics, Chalmers University of Technology and University of Gothenburg, Gothenburg, Sweden}
\email{micbjo@chalmers.se}
\keywords{spatial point processes, hyperuniformity, Bartlett spectral measure, structure factor, spectral inference, completely positive entropy, scattering intensity}

\subjclass[2020]{Primary 62M15, 62M30; Secondary 62G10, 60F05, 60G55.}

\begin{abstract}
We test hyperuniformity from one large realization of a stationary point
process. Hyperuniformity is equivalent to the average $A_r$ of the structure
factor over $B_r$ vanishing as $r\downarrow0$, so the target is a low-frequency
average rather than the value of the structure factor at the origin, which need
not exist. We estimate $A_r$ from squared Fourier coefficients at frequencies
drawn uniformly from $B_r$; if these coefficients are asymptotically Gaussian
at almost every fixed frequency, the squared coefficients converge to
independent variables with mean $A_r$. Assuming
$A_r=s+c r^\alpha+\varepsilon_r$ with
$|\varepsilon_r|\leq Lr^\beta$ and given exponents $0<\alpha<\beta$, a
two-radius extrapolation removes the $r^\alpha$ term and estimates $s$ with
deterministic error of order $r^\beta$. This gives confidence bounds and a
one-sided test of $H_0:s=0$ with asymptotic level at most $\gamma$, consistent
against every fixed $s>0$ as $R\to\infty$, then the number of sampled
frequencies tends to infinity, and then $r\downarrow0$. Essentially free
translation actions with completely positive entropy satisfy the Fourier
assumption.
\end{abstract}

\maketitle

\section{Introduction}
\label{sec:introduction}

Let $\eta$ be a stationary point process on $\bR^d$ with positive intensity
$\rho_\eta$ and local second moments, observed in $B_R:=B(0,R)$. It is
hyperuniform if
\[
    \frac{\Var(\eta(B_R))}{\lambda_d(B_R)}\longrightarrow0
    \qquad (R\to\infty),
\]
where $\lambda_d$ denotes Lebesgue measure. Hyperuniformity \cite{TS03}, the
suppression of large-scale density fluctuations, now appears across statistical
physics, materials science and stochastic geometry; see \cite{Tor18}. Recent
statistical work includes \cite{HGBL23,KLH26,MBL26}. We ask how to test
hyperuniformity from a single observed pattern.

Write
\[
    \hat f(\xi):=\int_{\bR^d}f(x)e^{-2\pi i\langle x,\xi\rangle}\,d\lambda_d(x),
    \qquad
    \chi_\xi(x):=e^{2\pi i\langle x,\xi\rangle}.
\]
For compactly supported $f$, put
\[
    M(f):=\sum_{x\in\eta}f(x)-\rho_\eta\int_{\bR^d}f\,d\lambda_d.
\]
The identity
\[
    \E\bigl[M(f)\overline{M(g)}\bigr]
    =\int_{\bR^d}\hat f(\xi)\overline{\hat g(\xi)}\,d\sigma_\eta(\xi)
\]
for Schwartz $f,g$ defines the Bartlett measure $\sigma_\eta$. If
$\sigma_\eta\ll\lambda_d$, write
$d\sigma_\eta=\rho_\eta S_\eta\,d\lambda_d$, call $S_\eta$ the structure
factor, and fix a nonnegative Borel representative. For $r>0$, define
\[
    A_r:=\frac{\sigma_\eta(B_r)}{\rho_\eta\lambda_d(B_r)}
        =\frac1{\lambda_d(B_r)}\int_{B_r}S_\eta(\xi)\,d\lambda_d(\xi).
\]
Hyperuniformity is equivalent to $A_r\to0$ as $r\downarrow0$
\cite[Proposition~3.3]{BH24}. Moreover,
\[
    \frac{\Var(\eta(B_R))}{\lambda_d(B_R)}
    =\rho_\eta\int_{\bR^d}F_R(\xi)S_\eta(\xi)\,d\lambda_d(\xi),
    \qquad
    F_R:=\frac{|\widehat{\ind_{B_R}}|^2}{\lambda_d(B_R)},
\]
where $F_R$ has integral one and concentrates at the origin. This explains why
low-frequency mass governs number variance, while the intrinsic target remains
$A_r$: the density $S_\eta$ is defined only almost everywhere, so a value
$S_\eta(0)$ need not be meaningful.

At a frequency $U$ drawn uniformly from $B_r$, the squared modulus of a
smoothly weighted Fourier sum behaves for large $R$ like $S_\eta(U)$ times a
unit-mean exponential variable; averaging over many draws therefore estimates
$A_r$. We assume that $A_r$ has a leading term $c r^\alpha$ with a remainder
of order $r^\beta$, and combine the estimates at $r$ and $\kappa r$ to cancel
$c$. The remaining bias has an explicit bound, which we add to the normal
critical value. All Fourier values come from the same pattern and only the
frequencies are sampled independently; condition \textup{(F)} below gives the
asymptotic Gaussian independence at distinct fixed frequencies.

\medskip\noindent\textbf{Informal result.}\ 
Under condition \textup{(F)} and the low-frequency model below, this procedure
has asymptotic level at most $\gamma$ at each fixed radius and becomes
consistent against every fixed $s>0$ as the radius tends to zero.

Fix $h\in C_c^\infty(B_1;\bC)$ with
$\int|h|^2\,d\lambda_d=1$, and put
\[
    Z_R(\xi):=R^{-d/2}M\bigl(h(\cdot/R)\chi_\xi\bigr).
\]

\begin{fouriercondition}
For every $n\geq1$ there is a Borel set
$\Omega_\eta^{(n)}\subset(\bR^d)^n$ of full Lebesgue measure, contained in
\[
    \{(u_1,\ldots,u_n):u_i\neq0,\ u_i\neq\pm u_j\text{ for }i\neq j\},
\]
such that for every $(u_1,\ldots,u_n)\in\Omega_\eta^{(n)}$,
\[
    (Z_R(u_1),\ldots,Z_R(u_n))
    \dto (G_{u_1},\ldots,G_{u_n}),
\]
where the $G_{u_j}$ are independent centered circularly symmetric complex
Gaussian variables (their real and imaginary parts are independent centered
Gaussians of equal variance) satisfying
\[
    \E|G_{u_j}|^2=\rho_\eta S_\eta(u_j),
    \qquad \E G_{u_j}^2=0.
\]
\end{fouriercondition}

We link positive radii to the origin through the model
\begin{equation}
\label{eq:intro-integrated-model}
    A_r=s+c r^\alpha+\varepsilon_r,
    \qquad |\varepsilon_r|\leq Lr^\beta,
    \qquad 0<r\leq r_0,
\end{equation}
where $0<\alpha<\beta$ and $L,r_0>0$ are given, while $s\geq0$ and
$c\in\bR$ are unknown. When the remainder vanishes identically, any $L>0$ is
admissible. Here $s=\lim_{r\downarrow0}A_r$ is an averaged low-frequency
limit; no pointwise limit of $S_\eta$ at the origin is assumed.
We test
\[
    H_0:s=0
    \qquad\text{against}\qquad
    H_1:s>0.
\]
Some assumption across radii is unavoidable for the label-free procedure used
here: Proposition~\ref{prop:finite-radius-nonidentifiability} shows that different
low-frequency profiles can give the same sampled values at finitely many positive
radii once the frequency labels are discarded.

For $\kappa>1$ and $0<r\leq r_0/\kappa$, set
\[
    D_{\alpha,\beta}(\kappa)
    :=\frac{\kappa^\alpha+\kappa^\beta}{\kappa^\alpha-1},
    \qquad
    s_r:=\frac{\kappa^\alpha A_r-A_{\kappa r}}{\kappa^\alpha-1},
    \qquad
    b_r:=L D_{\alpha,\beta}(\kappa)r^\beta.
\]
Thus $s_r$ is the extrapolated intercept obtained from the values at $r$ and
$\kappa r$ when $r^\alpha$ is used as the scale. Equation
\eqref{eq:intro-integrated-model} gives
\[
    s_r-s
    =\frac{\kappa^\alpha\varepsilon_r-\varepsilon_{\kappa r}}
            {\kappa^\alpha-1},
    \qquad
    |s_r-s|\leq b_r.
\]

\subsection{The two-radius rule and main result}
\label{subsec:intro-main}

For an arbitrary pair of arrays
$y=(y_{a,j}:a\in\{r,\kappa r\},\,1\leq j\leq m)$ with $m\geq2$, let
\[
    \bar y_a:=\frac1m\sum_{j=1}^m y_{a,j},
    \qquad
    v_a^2(y):=\frac1{m-1}\sum_{j=1}^m(y_{a,j}-\bar y_a)^2,
\]
and define
\begin{equation}
\label{eq:generic-two-radius}
\begin{aligned}
    \widehat s_r(y)
    &:=\frac{\kappa^\alpha\bar y_r-\bar y_{\kappa r}}
            {\kappa^\alpha-1},\\
    \operatorname{se}_r^2(y)
    &:=\frac{\kappa^{2\alpha}v_r^2(y)+v_{\kappa r}^2(y)}
            {m(\kappa^\alpha-1)^2}.
\end{aligned}
\end{equation}
For $\gamma\in(0,1)$, let
\begin{equation}
\label{eq:generic-reject}
    \operatorname{Rej}_r(y;\gamma)
    :=\{\widehat s_r(y)>b_r+z_{1-\gamma}\operatorname{se}_r(y)\},
\end{equation}
where $z_{1-\gamma}$ is the $(1-\gamma)$-quantile of $N(0,1)$. This rule
tests the enlarged null $s_r\leq b_r$, which contains $H_0$ because
$|s_r-s|\leq b_r$; the allowance $b_r$ covers the unknown remainder at the
price of conservativeness when $s_r<b_r$.

For the fixed test function $h$, define the windowed periodogram
\begin{equation}
\label{eq:windowed-periodogram}
    I_{R,h}(u)
    :=\widehat\rho_R^{-1}R^{-d}
      \left|\sum_{x\in\eta\cap B_R}h(x/R)\chi_u(x)\right|^2,
    \qquad
    \widehat\rho_R:=\frac{\eta(B_R)}{\lambda_d(B_R)},
\end{equation}
on $\{\widehat\rho_R>0\}$, and set it equal to zero otherwise. The sum in
\eqref{eq:windowed-periodogram} is not centered, but its centering term
$\rho_\eta R^{d/2}\hat h(-Ru)$ vanishes for every fixed $u\neq0$ because
$\hat h$ is Schwartz; see Theorem~\ref{thm:observed-randomized-limit}.

Draw $m$ frequencies uniformly from each of $B_r$ and $B_{\kappa r}$,
independently of $\eta$, and set
\[
    Y^{(R)}_{a,j}:=I_{R,h}(U_{a,j}),
    \qquad a\in\{r,\kappa r\},\quad 1\leq j\leq m.
\]

\begin{maintheorem}[Hyperuniformity test]
\label{thm:main}
Assume that $\eta$ is stationary and ergodic, has positive intensity and local
second moments, $\sigma_\eta\ll\lambda_d$, condition \textup{(F)} and
\eqref{eq:intro-integrated-model} hold, and
\[
    \int_{B_{r_0}}S_\eta(\xi)^2\,d\lambda_d(\xi)<\infty.
\]
Fix $\gamma\in(0,1)$ and $\kappa>1$, and write
\[
    \pi_{R,m}(r):=\Pp\bigl(\operatorname{Rej}_r(Y^{(R)};\gamma)\bigr).
\]
For every fixed $0<r\leq r_0/\kappa$,
\begin{enumerate}
\item under $H_0$,
\[
    \limsup_{m\to\infty}\limsup_{R\to\infty}\pi_{R,m}(r)\leq\gamma;
\]
\item if $s_r>b_r$, in particular if $s>2b_r$,
\[
    \liminf_{m\to\infty}\liminf_{R\to\infty}\pi_{R,m}(r)=1.
\]
\end{enumerate}
Consequently, for every sequence $r_n\downarrow0$ with $\kappa r_n\leq r_0$,
\[
    \limsup_{n\to\infty}\limsup_{m\to\infty}\limsup_{R\to\infty}
    \pi_{R,m}(r_n)\leq\gamma
\]
under $H_0$, while for every fixed $s>0$,
\[
    \lim_{n\to\infty}\liminf_{m\to\infty}\liminf_{R\to\infty}
    \pi_{R,m}(r_n)=1.
\]
\end{maintheorem}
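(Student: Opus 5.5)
The plan has three stages. First we identify the $R\to\infty$ limit of the sampled array for fixed $m$. Then we apply a classical CLT to the two-sample $t$-type statistic as $m\to\infty$. Finally we let $r\downarrow0$ using the deterministic bias bound $|s_r-s|\leq b_r$.

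\emph{Step 1: limit as $R\to\infty$ for fixed $m$.} Condition on the frequencies $U=(U_{a,j})$. They are i.i.d.\ uniform on $B_r$ and $B_{\kappa r}$, independent of $\eta$, so their joint law is absolutely continuous on $(\bR^d)^{2m}$. Hence almost surely $U\in\Omega^{(2m)}_\eta$: all coordinates are nonzero and satisfy $U_i\neq\pm U_j$. For such fixed $u$ we compare $I_{R,h}(u_i)$ with $\rho_\eta^{-1}|Z_R(u_i)|^2$. Three facts are needed.
\begin{itemize}
\item Since $\operatorname{supp}h\subset B_1$, the sum over $\eta\cap B_R$ equals $\sum_{x\in\eta}h(x/R)\chi_u(x)$.
\item The centering term equals $\rho_\eta R^{d/2}\hat h(-Ru)$, up to conjugation conventions, and it tends to $0$ because $\hat h$ is Schwartz and $u\neq0$.
\item Ergodicity gives $\widehat\rho_R\to\rho_\eta$ almost surely, by the mean ergodic theorem for stationary point processes.
\end{itemize}
By Slutsky's lemma and the continuous mapping theorem, condition (F) then gives
\[
    \bigl(Y^{(R)}_{a,j}\bigr)\dto\bigl(\rho_\eta^{-1}|G_{U_{a,j}}|^2\bigr)
\]
conditionally on $U$. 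The limits $\rho_\eta^{-1}|G_u|^2$ are independent exponential variables with means $S_\eta(u)$. Integrating over $U$ by dominated convergence, the unconditional limit $W=(W_{a,j})$ is a family of independent variables with
\[
    W_{a,j}\overset{d}{=}S_\eta(U_{a,j})E_{a,j},
\]
where the $E_{a,j}$ are i.i.d.\ $\mathrm{Exp}(1)$. Thus $\E W_{a,j}=A_a$ and
\[
    \E W_{a,j}^2=\frac{2}{\lambda_d(B_a)}\int_{B_a}S_\eta^2\,d\lambda_d,
\]
which is finite by the $L^2$ assumption.

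\emph{Step 2: transfer to the rejection probability.} The set $\operatorname{Rej}_r(y;\gamma)$ is defined by a continuous inequality in $y$ wherever $\operatorname{se}_r(y)>0$. Its boundary therefore has probability zero under the law of $W$. The reason is that $W$ has a density: the exponential factors are absolutely continuous, and $S_\eta(U)>0$ on a set of positive measure. If $S_\eta=0$ a.e.\ on $B_{\kappa r}$, then $W\equiv0$, the statistic is degenerate, and we handle it directly. In that case $\widehat s_r=0$, the rejection event fails, and both claims are consistent: we are under $H_0$, and $s_r=0\leq b_r$. With this, the portmanteau theorem gives
\[
    \lim_{R\to\infty}\pi_{R,m}(r)=\Pp\bigl(\operatorname{Rej}_r(W;\gamma)\bigr).
\]
This is where I expect the main obstacle: the tie and degeneracy issues on the boundary must be handled honestly. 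A fallback is to sandwich the event between open and closed versions and use a limsup/liminf portmanteau, which only needs the inequalities in (i) and (ii).

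\emph{Step 3: $m\to\infty$ and $r\downarrow0$.} For i.i.d.\ finite-variance rows, the law of large numbers gives $v_a^2(W)\to\Var W_{a,1}$, and the CLT gives
\[
    \frac{\widehat s_r(W)-s_r}{\operatorname{se}_r(W)}\dto N(0,1)
\]
whenever the variance is positive. Under $H_0$ we have $s_r\leq b_r$, so
\[
    \Pp(\operatorname{Rej}_r)\leq\Pp\bigl((\widehat s_r-s_r)/\operatorname{se}_r>z_{1-\gamma}\bigr)\to\gamma,
\]
which proves (i). If $s_r>b_r$, then $\widehat s_r\to s_r$ almost surely while $\operatorname{se}_r\to0$, so the rejection probability tends to $1$, which proves (ii). The implication $s>2b_r\Rightarrow s_r\geq s-b_r>b_r$ is immediate.

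For the sequence statements, (i) holds at each $r_n$, which gives the null claim. For fixed $s>0$ we have $b_{r_n}=LD_{\alpha,\beta}(\kappa)r_n^\beta\to0$, so $s>2b_{r_n}$ for all large $n$. Part (ii) then makes the inner double liminf equal to $1$ eventually, which gives the consistency claim.
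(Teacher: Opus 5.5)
Your proposal follows the paper's route. You obtain the $R\to\infty$ limit $S_\eta(U_{a,j})W_{a,j}$ from (F) conditionally on the frequencies, as in Theorem~\ref{thm:observed-randomized-limit} and Corollary~\ref{cor:two-radius-limit}. You then apply Portmanteau at fixed $m$, and the i.i.d.\ CLT with the allowance $s_r\leq b_r$ under $H_0$, as in Corollary~\ref{cor:fixed-radius-inference}. The degenerate case is treated separately, as in Lemma~\ref{lem:degenerate}, and $b_{r_n}\to0$ finishes the argument.

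\textbf{A wrong justification in Step 2.} You argue that the boundary of $\operatorname{Rej}_r$ is null because ``$W$ has a density''. This fails whenever $S_\eta$ vanishes on a subset of $B_{\kappa r}$ of positive but not full measure. Then each $W_{a,j}$ has an atom at $0$, and the limit law is not absolutely continuous. Your null-boundary conclusion is still true, but it needs a different argument. Write $g:=\widehat s_r-b_r-z_{1-\gamma}\operatorname{se}_r$, which is continuous everywhere, so no restriction to $\operatorname{se}_r>0$ is needed. Condition on which entries vanish. On the corresponding coordinate subspace, $\{g=0\}$ lies in the zero set of the polynomial $(\widehat s_r-b_r)^2-z_{1-\gamma}^2\operatorname{se}_r^2$. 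This polynomial equals $b_r^2>0$ at the origin, so its zero set there is Lebesgue-null, while the nonzero entries have a joint density.

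You do not need that exact limit, though. Your fallback, Portmanteau applied to the open sets $\{g<0\}$ and $\{g>0\}$, is exactly what the paper does, and it gives the one-sided bounds in (i) and (ii).

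\textbf{Two smaller points.} First, in the degenerate case the transfer to finite $R$ rests on $g(0)=-b_r<0$, that is, on $L>0$; you should say so explicitly. Second, you only need $\widehat\rho_R\to\rho_\eta$ in probability. The mean ergodic theorem gives $L^1$ convergence, not almost sure convergence, which suffices.
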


\medskip\noindent\textbf{Scope.}\ 
The guarantee holds within \eqref{eq:intro-integrated-model}. The exponents
$\alpha<\beta$, the constants $L,r_0$, and the choices $r,\kappa,m,h$ are
given, not estimated. Processes whose low-frequency average diverges or has
logarithmic corrections lie outside this model, and the theorem gives no
finite-$R$ calibration or rule for choosing $r$ and $m$ as functions of $R$.
Subsection~\ref{subsec:L-sensitivity} reports the largest $L$ for which the
observed data still lead to rejection.

\subsection{A concrete class satisfying \textup{(F)}}
\label{subsec:intro-cpe}

Condition \textup{(F)} is a fluctuation assumption at fixed frequencies,
whereas \eqref{eq:intro-integrated-model} controls low-frequency averages; the
two assumptions are independent. Completely positive entropy supplies a
concrete class for which \textup{(F)} is known. A translation action is
essentially free if almost every pattern has no nonzero period, and has
completely positive entropy (CPE) if every nontrivial equivariant factor has
positive entropy. Bernoulli actions, hence the Poisson process, are CPE; a
randomly shifted lattice is not, since it has periods and zero entropy.

The next result follows from \cite[Section~2.5, Theorem~A, Theorem~5.1 and
Corollary~5.5]{Bjo26}.

\begin{theorem}[CPE supplies \textup{(F)}]
\label{thm:cpe-supplies-F}
Let $\eta$ be a stationary point process on $\bR^d$ with positive intensity and
local second moments. If its translation action is essentially free and has
completely positive entropy, then the action is ergodic,
$\sigma_\eta\ll\lambda_d$, and condition \textup{(F)} holds. In fact, the
joint Fourier limit holds for every fixed finite family of smooth test functions.
\end{theorem}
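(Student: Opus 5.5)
The plan is a reduction to \cite{Bjo26}: check its hypotheses, then translate its conclusions into the three assertions, namely ergodicity, $\sigma_\eta\ll\lambda_d$, and the Gaussian limit in condition \textup{(F)}.

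\textbf{Ergodicity.} This needs no machinery. If the translation action has completely positive entropy, consider the factor generated by the invariant $\sigma$-algebra. On that factor the action is trivial, and a trivial factor has zero entropy. CPE therefore forces the factor to be trivial, so invariant sets have measure $0$ or $1$.

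\textbf{Absolute continuity.} Here I invoke \cite[Theorem~A]{Bjo26}, with the setup of \cite[Section~2.5]{Bjo26}. For essentially free CPE actions of $\bR^d$ it gives Lebesgue spectrum on the orthocomplement of constants. The centered linear statistics $M(f)$, for $f$ Schwartz, lie in that orthocomplement inside $L^2$. By the defining identity for $\sigma_\eta$, the spectral measure of $M(f)$ under the Koopman representation is $|\hat f|^2\,d\sigma_\eta$. Lebesgue spectrum makes every such spectral measure absolutely continuous. Choosing $f$ with $\hat f\neq0$ on a given ball then shows that $\sigma_\eta$ restricted to that ball is $\ll\lambda_d$, and exhausting $\bR^d$ by balls gives $\sigma_\eta\ll\lambda_d$. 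One point needs care: the atom of $\sigma_\eta$ at the origin, since $\sigma_\eta$ might a priori contain a multiple of $\delta_0$. This atom would correspond to the invariant, i.e.\ constant, component of $M(f)$, and ergodicity kills it.

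\textbf{The Gaussian limit.} This should be the content of \cite[Theorem~5.1 and Corollary~5.5]{Bjo26}: a joint central limit theorem for $R^{-d/2}M(h_k(\cdot/R)\chi_{u_k})$ at almost every tuple of frequencies. The limits are independent circularly symmetric complex Gaussians with variance given by the spectral density, here $\rho_\eta S_\eta(u_k)$.

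The work is in matching normalizations, which I would do in four steps:
\begin{enumerate}
\item Check that the window $h\in C_c^\infty(B_1)$ with $\int|h|^2=1$ fits the test-function class of \cite{Bjo26}.
\item Check the variance. It is $R^{-d}\int|\hat h_R(\xi-u)|^2\,d\sigma_\eta(\xi)$, where $\hat h_R(\xi)=R^d\hat h(R\xi)$. At Lebesgue points of $S_\eta$ this tends to $\rho_\eta S_\eta(u)$, because $\int|\hat h|^2=1$.
\item Take the full-measure set $\Omega^{(n)}_\eta$ to be the intersection of the set of tuples of Lebesgue points with the set supplied by \cite{Bjo26}. Removing the null sets $\{u_i=0\}$ and $\{u_i=\pm u_j\}$ puts it inside the required region.
\item Obtain $\E G^2=0$ and independence from asymptotic decorrelation. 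The pseudo-covariance $\E[M(f)M(g)]$ involves $\hat f(\xi)\hat g(-\xi)$, and for frequencies $u_i+u_j\neq0$ or $u_i\neq u_j$ the two bumps $\hat h_R(\cdot-u_i)$ and $\hat h_R(\cdot\pm u_j)$ separate as $R\to\infty$. Hence all cross-covariances and pseudo-covariances vanish.
\end{enumerate}

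The final sentence of the theorem, joint limits for any fixed finite family of smooth test functions, is the general form of \cite[Corollary~5.5]{Bjo26}. The main obstacle is not a new estimate but certifying that the cited CLT is genuinely joint across distinct frequencies, with the same exceptional null set uniformly in $R$. If \cite{Bjo26} states it only for single frequencies, I would use Cramér--Wold: apply the single-statistic CLT to real linear combinations $\sum_k(a_kM(\cdots)+\bar a_k\overline{M(\cdots)})$. This is again a centered linear statistic of a smooth function, so its limiting variance comes from the separation computation in step 4.
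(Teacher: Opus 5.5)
Your proposal matches the paper, which also obtains the theorem by citing \cite[Section~2.5, Theorem~A, Theorem~5.1 and Corollary~5.5]{Bjo26}, and which deduces ergodicity exactly as you do, from the invariant $\sigma$-algebra being a zero-entropy factor; your normalization checks are consistent with that reduction. One caveat: your Cram\'er--Wold fallback would not follow directly from a single-frequency statement, because a combination of windows modulated at different frequencies is not of the form $\phi(\cdot/R)\chi_u$ for a fixed $\phi$. The fallback is not needed, however, since the cited results already give the joint limit for finite families.
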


CPE also gives ergodicity because the invariant $\sigma$-algebra is a
zero-entropy factor. Absolute continuity, even with a bounded continuous
density, does not by itself imply the corresponding Fourier CLT for all smooth
test functions; see \cite[Theorem~B]{Bjo26}. Section~\ref{subsec:examples} gives
examples on both sides of the test: the sine process is hyperuniform, whereas
the Poisson, thinned sine and Gaussian determinantal processes are not.

\subsection{Relation to existing work}
\label{subsec:intro-related}

The closest testing procedure is due to Klatt, Last and Henze \cite{KLH26}.
Their Fourier central limit theorem uses summability and integrability
conditions on higher-order factorial cumulants, including Brillinger mixing
\cite[Theorem~4.1]{KLH26}, and their statistical model is pointwise at selected
wave vectors. The approaches differ in three respects:
\begin{enumerate}
\item \emph{Dependence.} Condition \textup{(F)} can hold without their
cumulant assumptions; Theorem~\ref{thm:cpe-supplies-F} gives one such route.
\item \emph{Model.} They assume a pointwise expansion at selected wave vectors,
whereas we constrain only the averages $A_r$; Proposition~\ref{prop:pointwise-to-integrated}
shows that a pointwise expansion with a controlled remainder implies our model.
After randomization the limit law is generally a mixture of exponentials with
random mean $S_\eta(U)$, so their two-parameter exponential likelihood is not
available.
\item \emph{Calibration.} They calibrate a likelihood-ratio statistic
numerically, whereas we use a sample-variance normalization with standard
normal critical values.
\end{enumerate}
Mastrilli, B{\l}aszczyszyn and Lavancier \cite{MBL26} estimate the
hyperuniformity exponent and obtain asymptotic normality under Brillinger
mixing when $\alpha<d$. Hawat, Gautier, Bardenet and Lachi\`eze-Rey
\cite{HGBL23} and Mastrilli \cite{Mas25} study structure-factor estimation and
related inference. Random-frequency Fourier limits have a time-series
precedent in Peligrad and Wu \cite{PW10}.

\section{Randomized windowed periodograms}
\label{sec:experiment}

This section shows that the windowed periodogram at random frequencies
converges to the variables $S_\eta(U)W$ used below.

\begin{theorem}[Randomized windowed-periodogram limit]
\label{thm:observed-randomized-limit}
Assume that $\eta$ is stationary and ergodic, has positive intensity and local
second moments, $\sigma_\eta\ll\lambda_d$, and condition \textup{(F)} holds. Let $(U_1,\ldots,U_m)$ have any joint law absolutely
continuous with respect to Lebesgue measure on $(\bR^d)^m$ and be independent
of $\eta$. For fixed $m$,
\[
    (U_j,I_{R,h}(U_j))_{j=1}^m
    \dto
    (U_j,S_\eta(U_j)W_j)_{j=1}^m,
\]
where $W_1,\ldots,W_m$ are independent $\operatorname{Exp}(1)$ variables,
independent of the frequencies.
\end{theorem}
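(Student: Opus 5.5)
The proof conditions on the frequencies, works at a fixed frequency vector, and then integrates over the law of $(U_1,\ldots,U_m)$. Since that law is absolutely continuous, it gives full mass to the set $\Omega_\eta^{(m)}$ of condition \textup{(F)}, so it suffices to treat a fixed $(u_1,\ldots,u_m)\in\Omega_\eta^{(m)}$. For a bounded continuous test function $\Phi$ on $(\bR^d\times\bC)^m$, independence of $U$ and $\eta$ together with Fubini gives
\[
    \E\,\Phi\bigl((U_j,I_{R,h}(U_j))_j\bigr)
    =\int g_R(u)\,dP_U(u),
    \qquad
    g_R(u):=\E\,\Phi\bigl((u_j,I_{R,h}(u_j))_j\bigr).
\]
Since $|g_R|\le\|\Phi\|_\infty$, dominated convergence reduces the theorem to showing, for each fixed $u\in\Omega_\eta^{(m)}$,
\[
    (I_{R,h}(u_j))_j\dto(S_\eta(u_j)W_j)_j .
\]
The limit $\int\E\Phi((u_j,S_\eta(u_j)W_j)_j)\,dP_U(u)$ is then exactly the law of $(U_j,S_\eta(U_j)W_j)_j$ with $W$ independent of $U$.

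Now fix $u\in\Omega_\eta^{(m)}$; in particular each $u_j\neq0$. Because $h$ is supported in $B_1$, we have $h(x/R)=0$ off $B_R$, so the sum over $\eta\cap B_R$ equals $\sum_{x\in\eta}h(x/R)\chi_{u_j}(x)$. This gives
\[
    R^{-d/2}\sum_{x\in\eta\cap B_R}h(x/R)\chi_{u_j}(x)
    =Z_R(u_j)+\rho_\eta R^{-d/2}\int h(x/R)\chi_{u_j}(x)\,dx
    =Z_R(u_j)+\rho_\eta R^{d/2}\hat h(-Ru_j).
\]
Since $\hat h$ is Schwartz and $u_j\neq0$, the deterministic term is $O(R^{-N})$ for every $N$, so it tends to zero. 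By condition \textup{(F)} the vector of these sums therefore converges in law to $(G_{u_j})_j$. Next, ergodicity and the mean ergodic theorem (for instance Wiener's theorem applied to the averages $\eta(B_R)/\lambda_d(B_R)$) give $\widehat\rho_R\to\rho_\eta>0$ in probability, so $\Pp(\widehat\rho_R>0)\to1$. Slutsky's lemma and the continuous mapping $z\mapsto|z|^2/\rho$ then give
\[
    (I_{R,h}(u_j))_j\dto\bigl(|G_{u_j}|^2/\rho_\eta\bigr)_j .
\]

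Finally, if $G$ is circularly symmetric complex Gaussian with $\E|G|^2=v$, then $|G|^2/v\sim\operatorname{Exp}(1)$, being a $\chi^2_2$ variable divided by $2$. When $v=0$ we have $G=0$ and take $W$ to be an independent exponential. Independence of the $G_{u_j}$ yields $|G_{u_j}|^2/\rho_\eta=S_\eta(u_j)W_j$ with $W_j$ i.i.d.\ $\operatorname{Exp}(1)$, as required.

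The step needing the most care is the measurability and Fubini bookkeeping in the first paragraph. We need $u\mapsto g_R(u)$ to be Borel, which holds because $\Phi$ is continuous and $I_{R,h}(u)$ is jointly measurable in $(u,\eta)$. We also need the limit function $u\mapsto\E\Phi((u_j,S_\eta(u_j)W_j)_j)$ to be measurable, which holds because we fixed a Borel representative of $S_\eta$. Finally, the pointwise convergence need only hold on $\Omega_\eta^{(m)}$, and this set has full $P_U$-measure by absolute continuity. The probabilistic content itself is carried entirely by condition \textup{(F)}.
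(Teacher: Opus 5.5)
Your proof is correct and follows essentially the same route as the paper. Both arguments restrict to the full-measure set $\Omega_\eta^{(m)}$, discard the centering term $\rho_\eta R^{d/2}\hat h(-Ru_j)$, combine (F) with $\widehat\rho_R\to\rho_\eta$ via Slutsky and continuous mapping, use $|G|^2/\E|G|^2\sim\operatorname{Exp}(1)$, and integrate over $\mu_U$. The differences are minor: you spell out the Fubini, dominated-convergence and measurability steps that the paper compresses into ``integrating bounded continuous test functions against $\mu_U$'', and you use convergence in probability of $\widehat\rho_R$ where the paper uses almost sure convergence; either suffices.
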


\begin{proof}
The pointwise ergodic theorem for balls gives
$\widehat\rho_R\to\rho_\eta$ almost surely and hence
$\Pp(\widehat\rho_R=0)\to0$; see also \cite[Section~6.2]{Bjo26}. Since
$\operatorname{supp}h\subset B_1$, the sum in
\eqref{eq:windowed-periodogram} equals the corresponding sum over all of
$\eta$, and for fixed $u\neq0$,
\[
    R^{-d/2}\sum_{x\in\eta}h(x/R)\chi_u(x)
    =Z_R(u)+\rho_\eta R^{d/2}\hat h(-Ru),
\]
with the second term tending to zero because $\hat h$ is Schwartz.

Let $\mu_U$ be the joint law of the frequency tuple. Absolute continuity gives
$\mu_U(\Omega_\eta^{(m)})=1$. Conditional on $U=u\in\Omega_\eta^{(m)}$,
condition \textup{(F)}, Slutsky's theorem and the continuous mapping theorem
give
\[
    (I_{R,h}(u_j))_{j=1}^m
    \dto
    (S_\eta(u_j)|G_j|^2)_{j=1}^m,
\]
where the $G_j$ are independent standard complex Gaussians with
$\E|G_j|^2=1$. Since $|G_j|^2\sim\operatorname{Exp}(1)$, integrating bounded
continuous test functions against $\mu_U$ proves the claim.
\end{proof}

\begin{corollary}[Two-radius randomized limit]
\label{cor:two-radius-limit}
Under the assumptions of Theorem~\ref{thm:observed-randomized-limit}, for
$a\in\{r,\kappa r\}$, let $U_{a,1},\ldots,U_{a,m}$ be independent and
uniform on $B_a$, and let $W_{a,j}$ be independent $\operatorname{Exp}(1)$
variables, independent of the frequencies. Define
\[
    Y^{(\infty)}_{a,j}:=S_\eta(U_{a,j})W_{a,j}.
\]
Then the observed array from Subsection~\ref{subsec:intro-main} satisfies
\[
    Y^{(R)}\dto Y^{(\infty)}
    \qquad (R\to\infty)
\]
for every fixed $m,r$ and $\kappa$.
\end{corollary}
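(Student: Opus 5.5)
The plan is to reduce the corollary directly to Theorem~\ref{thm:observed-randomized-limit} by treating the $2m$ frequencies as a single tuple. Concretely, I would enumerate the array as
\[
    (V_1,\ldots,V_{2m}):=(U_{r,1},\ldots,U_{r,m},U_{\kappa r,1},\ldots,U_{\kappa r,m}).
\]
This is a random vector in $(\bR^d)^{2m}$ and is independent of $\eta$. Its joint law is the product of $m$ copies of the uniform law on $B_r$ and $m$ copies of the uniform law on $B_{\kappa r}$. That product has density
\[
    \lambda_d(B_r)^{-m}\lambda_d(B_{\kappa r})^{-m}\,
    \ind_{B_r^m\times B_{\kappa r}^m}
\]
with respect to Lebesgue measure on $(\bR^d)^{2m}$, so it is absolutely continuous. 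This verifies the only hypothesis on the frequencies required by Theorem~\ref{thm:observed-randomized-limit}, applied with $2m$ in place of $m$. The remaining hypotheses on $\eta$ are assumed verbatim in the corollary.

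Theorem~\ref{thm:observed-randomized-limit} then gives the joint convergence
\[
    (V_k,I_{R,h}(V_k))_{k=1}^{2m}\dto (V_k,S_\eta(V_k)W_k)_{k=1}^{2m},
\]
where $W_1,\ldots,W_{2m}$ are independent $\operatorname{Exp}(1)$ variables, independent of $(V_k)$. I would project onto the second coordinates, which is a continuous map, and then undo the enumeration. This yields $Y^{(R)}\dto(S_\eta(U_{a,j})W_{a,j})_{a,j}$, where the relabelled $W_{a,j}$ are independent $\operatorname{Exp}(1)$ variables, independent of the frequencies. The limit coincides in law with $Y^{(\infty)}$ as defined, since in both cases the frequencies have the stated product law and the exponentials are i.i.d. and independent of them.

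There is essentially no obstacle. The only point needing care is that the limit in Theorem~\ref{thm:observed-randomized-limit} is stated jointly with the frequencies, and the limiting frequencies must have the same law as the sampled ones. Both facts are built into that theorem's statement. Because the frequency law does not depend on $R$, the identification of the limit is immediate.
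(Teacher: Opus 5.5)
Your proposal is correct and follows the paper's proof, which simply applies Theorem~\ref{thm:observed-randomized-limit} to the full tuple of the $2m$ sampled frequencies. You also write out the steps the paper leaves implicit: absolute continuity of the product of uniform laws, projection onto the periodogram coordinates, and relabelling of the exponentials.
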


\begin{proof}
Apply Theorem~\ref{thm:observed-randomized-limit} to the full tuple of the
$2m$ sampled frequencies.
\end{proof}

\section{Two-radius inference}
\label{sec:low-frequency}

\subsection{Why a low-frequency assumption is needed}
\label{subsec:finite-radius-identifiability}

For nonnegative $S\in L^1_{\rm loc}(\lambda_d)$, write
\[
    A_r[S]:=\frac1{\lambda_d(B_r)}\int_{B_r}S(\xi)\,d\lambda_d(\xi).
\]
By the next proposition, a function vanishing near the origin and one that does
not can generate identical sampled values at finitely many positive radii. The
test discards the frequency labels, which yields i.i.d. variables with mean
$A_r$; using the labels would require a pointwise model for $S_\eta$, which we
avoid.

\begin{proposition}[Nonidentifiability from finitely many positive radii]
\label{prop:finite-radius-nonidentifiability}
Fix $0<r_1<\cdots<r_J$. There exist bounded, compactly supported, radial
functions $S_0,S_1\geq0$ such that
\[
    A_{r_j}[S_0]=A_{r_j}[S_1],
    \qquad j=1,\ldots,J,
\]
but
\[
    \lim_{r\downarrow0}A_r[S_0]=0,
    \qquad
    \lim_{r\downarrow0}A_r[S_1]>0.
\]
More strongly, for arbitrary finite sample sizes $m_1,\ldots,m_J$, let
$U_{jk}\sim\operatorname{Unif}(B_{r_j})$ and
$W_{jk}\sim\operatorname{Exp}(1)$ be mutually independent. Then
\begin{equation}
\label{eq:finite-radius-same-law}
    \bigl(S_0(U_{jk})W_{jk}\bigr)_{j,k}
    \stackrel{d}=
    \bigl(S_1(U_{jk})W_{jk}\bigr)_{j,k}.
\end{equation}
\end{proposition}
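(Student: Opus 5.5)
The plan is to first reduce \eqref{eq:finite-radius-same-law} to a one-dimensional statement about distributions, and then build $S_0,S_1$ as explicit radial rearrangements of each other. The first claim, equality of $A_{r_j}[S_0]$ and $A_{r_j}[S_1]$, will then follow at once.

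\emph{Reduction.} The pairs $(U_{jk},W_{jk})$ are mutually independent, so both arrays in \eqref{eq:finite-radius-same-law} have independent entries. The law of each array is therefore the product of the laws of its entries. Since $W_{jk}$ is independent of $U_{jk}$, the law of $S_i(U_{jk})W_{jk}$ is determined by the law of $S_i(U_{jk})$. It thus suffices to show that, for each $j$ and $U\sim\operatorname{Unif}(B_{r_j})$, the random variables $S_0(U)$ and $S_1(U)$ have the same distribution. In other words, the pushforwards of normalized Lebesgue measure on $B_{r_j}$ under $S_0$ and under $S_1$ must coincide. Taking expectations then gives $A_{r_j}[S_0]=A_{r_j}[S_1]$, because $A_r[S]=\E S(U)$ for $U$ uniform on $B_r$.

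\emph{Construction.} Work in the volume coordinate $t=\lambda_d(B_{|\xi|})$, under which Lebesgue measure on $B_r$ corresponds to Lebesgue measure on $[0,\lambda_d(B_r)]$. Put $V_1:=\lambda_d(B_{r_1})$ and let $\rho_*>0$ satisfy $\lambda_d(B_{\rho_*})=V_1/2$, so $\rho_*<r_1$. Define
\[
S_1:=\ind_{B_{\rho_*}},\qquad S_0:=\ind_{B_{r_1}\setminus B_{\rho_*}}.
\]
Both functions are bounded, nonnegative, radial and supported in $\overline{B_{r_1}}$. Since $B_{r_1}\subset B_{r_j}$ for every $j$, for $U$ uniform on $B_{r_j}$ each of $S_0(U)$ and $S_1(U)$ is a Bernoulli variable with success probability
\[
\frac{V_1/2}{\lambda_d(B_{r_j})}.
\]
So the laws agree at every radius $r_j$, which by the reduction gives both the equality of the averages and \eqref{eq:finite-radius-same-law}. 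Finally, for $r<\rho_*$ we have $A_r[S_0]=0$ and $A_r[S_1]=1$, so the limits as $r\downarrow0$ are $0$ and $1>0$.

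I expect no real obstacle. The only point needing care is the reduction step: one must check that independence across all indices $(j,k)$, together with the independence of $W$ from $U$, really lets the joint law factor, so that equality of the one-dimensional marginals at each radius suffices.
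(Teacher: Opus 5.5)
Your proof is correct and is essentially the paper's argument. The paper also takes a centered ball and an annulus of equal volume inside $B_{r_1}$, namely $S_1=\vartheta\ind_{B_\delta}$ and $S_0=\vartheta\ind_{B_\Delta\setminus B_\delta}$ with $\Delta=2^{1/d}\delta<r_1$; it then notes that $S_0(U_{jk})$ and $S_1(U_{jk})$ are identically distributed two-point variables, and multiplies by the independent exponentials. Your choice $\Delta=r_1$, $\vartheta=1$ is a boundary case of that construction and works equally well.
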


\begin{proof}
Choose $\delta>0$ so small that $\Delta:=2^{1/d}\delta<r_1$, fix
$\vartheta>0$, and set
\[
    S_0=\vartheta\ind_{B_\Delta\setminus B_\delta},
    \qquad
    S_1=\vartheta\ind_{B_\delta}.
\]
Since $\lambda_d(B_\Delta\setminus B_\delta)=\lambda_d(B_\delta)$, for every
$j,k$ both $S_0(U_{jk})$ and $S_1(U_{jk})$ equal $\vartheta$ with probability
$\lambda_d(B_\delta)/\lambda_d(B_{r_j})$ and equal zero otherwise. The arrays
$(S_i(U_{jk}))_{j,k}$ have the same joint law, and multiplication by the
independent exponential variables gives \eqref{eq:finite-radius-same-law}.
For $0<r<\delta$, one has $A_r[S_0]=0$ and $A_r[S_1]=\vartheta$.
\end{proof}

The functions $S_0,S_1$ need not be point-process structure factors; the
proposition concerns only the reduction to sampled values.

\subsection{The low-frequency assumption and examples}
\label{subsec:examples}

The examples use
$S_\eta=1+\rho_\eta\widehat{(g-1)}$, where $g$ is the pair-correlation
function whenever this formula is defined. A pointwise expansion implies
\eqref{eq:intro-integrated-model} but is not required.

\begin{proposition}[Pointwise sufficient condition]
\label{prop:pointwise-to-integrated}
If a representative of $S_\eta$ satisfies
\[
    |S_\eta(\xi)-s-t\|\xi\|^\alpha|\leq C\|\xi\|^\beta,
    \qquad \|\xi\|\leq r_0,
\]
then \eqref{eq:intro-integrated-model} holds with
\[
    c=\frac{d}{d+\alpha}t,
    \qquad
    L=\frac{d}{d+\beta}C.
\]
\end{proposition}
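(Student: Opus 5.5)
The plan is to integrate the pointwise bound in polar coordinates. Write $\omega_d:=\lambda_d(B_1)$, so $\lambda_d(B_r)=\omega_d r^d$. The key identity is that for any exponent $\theta>-d$,
\[
    \frac1{\lambda_d(B_r)}\int_{B_r}\|\xi\|^\theta\,d\lambda_d(\xi)
    =\frac{d\omega_d}{\omega_d r^d}\int_0^r t^{\theta+d-1}\,dt
    =\frac{d}{d+\theta}\,r^\theta .
\]
Here we use that the surface measure of the unit sphere is $d\omega_d$. Applying this with $\theta=0$ gives average $1$, and applying it with $\theta=\alpha$ gives $\frac{d}{d+\alpha}r^\alpha$.

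For $0<r\leq r_0$ I define
\[
    \varepsilon_r:=A_r-s-\frac{d}{d+\alpha}t\,r^\alpha
    =\frac1{\lambda_d(B_r)}\int_{B_r}\bigl(S_\eta(\xi)-s-t\|\xi\|^\alpha\bigr)\,d\lambda_d(\xi).
\]
This uses linearity of the average together with the two identities above. The integrand is measurable, and it is integrable on $B_r$ because it is bounded by $C\|\xi\|^\beta\leq Cr_0^\beta$. Since $S_\eta\geq0$ is locally integrable, $A_r$ is finite and the splitting is legitimate.

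Next I bound the absolute value by the average of $C\|\xi\|^\beta$, which by the identity with $\theta=\beta$ equals $\frac{d}{d+\beta}Cr^\beta$. This gives $|\varepsilon_r|\leq Lr^\beta$ with $L=\frac{d}{d+\beta}C$ and $c=\frac{d}{d+\alpha}t$, as claimed.

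Two minor points need checking. First, the hypothesis is stated for one representative of $S_\eta$, while $A_r$ depends only on the a.e. class, so the choice of representative does not matter. Second, the constraints $s\geq0$ and $0<\alpha<\beta$ of the model are inherited from the hypotheses; $s\geq0$ follows since $S_\eta\geq0$ forces $s=\lim A_r\geq0$.

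There is no real obstacle here. The only step needing care is the polar-coordinates constant $d/(d+\theta)$, and I would double-check it against the case $d=1$, where $\frac1{2r}\int_{-r}^r|x|^\theta\,dx=\frac{r^\theta}{1+\theta}$.
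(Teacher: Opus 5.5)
Your proposal is correct and follows the paper's proof: both average the pointwise bound over $B_r$ using $\frac{1}{\lambda_d(B_r)}\int_{B_r}\|\xi\|^\theta\,d\lambda_d(\xi)=\frac{d}{d+\theta}r^\theta$, which yields $c=\frac{d}{d+\alpha}t$ and $L=\frac{d}{d+\beta}C$. Your remarks that $A_r$ does not depend on the choice of representative and that $s\geq 0$ is inherited from $S_\eta\geq 0$ are sound additions. One cosmetic point: you reuse $t$ as the polar integration variable even though $t$ is already the coefficient in the hypothesis.
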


\begin{proof}
For $\theta>0$,
\[
    \frac1{\lambda_d(B_r)}\int_{B_r}\|\xi\|^\theta\,d\lambda_d(\xi)
    =\frac{d}{d+\theta}r^\theta.
\]
Averaging the pointwise bound proves the claim.
\end{proof}

\begin{example}[Poisson process]
For a homogeneous Poisson process, $S_\eta\equiv1$, so $s=1$, $c=0$ and the
remainder vanishes. Its translation action is essentially free and Bernoulli,
so Theorem~\ref{thm:cpe-supplies-F} applies. Here the limiting periodogram value
is simply $\operatorname{Exp}(1)$.
\end{example}

\begin{example}[Sine process]
For the unit-intensity sine process on $\bR$,
$K_{\rm sine}(x)=\sin(\pi x)/(\pi x)$ and
$\widehat K_{\rm sine}=\ind_{[-1/2,1/2]}$. Hence
\[
    S_\eta(\xi)
    =1-\widehat{|K_{\rm sine}|^2}(\xi)
    =1-(\ind_{[-1/2,1/2]}*\ind_{[-1/2,1/2]})(\xi)
    =|\xi|,
    \qquad |\xi|\leq1.
\]
Thus, for any $r_0\leq1$, the model holds exactly with
$A_r=r/2$, $s=0$, $\alpha=1$, $c=1/2$ and zero remainder. By
\cite[Theorem~1.1]{Osa21}, its translation system is isomorphic to that of a
homogeneous Poisson process, hence is essentially free and CPE; therefore
Theorem~\ref{thm:cpe-supplies-F} applies. Also $S_\eta\in L^2(B_r)$.
\end{example}

\begin{example}[Thinned sine process]
For independent $p$-thinning, $0<p<1$, the pair-correlation function is
unchanged and the intensity is multiplied by $p$, so
\[
    S_{\eta,p}=1-p+pS_\eta,
    \qquad
    A_{p,r}=1-p+\frac p2r.
\]
Thus, for any $r_0\leq1$, the model holds exactly with
$s=1-p>0$, $\alpha=1$, $c=p/2$ and zero remainder. The process is determinantal with multiplier
$p\ind_{[-1/2,1/2]}$, which is integrable and takes values in $[0,1]$;
\cite[Theorem~1.1]{Osa21} makes its translation system isomorphic to a
homogeneous Poisson system, hence essentially free and CPE.
\end{example}

\begin{example}[Gaussian determinantal process]
This non-hyperuniform example has a nonzero remainder and makes the role of
$L$ explicit. Let $0<p\leq1$ and take Fourier multiplier
$\varphi(\xi)=p e^{-\pi\|\xi\|^2}$. It is integrable with values in $[0,1]$,
so it defines a determinantal process of intensity $\rho=p$; by
\cite[Theorem~1.1]{Osa21}, its translation system is isomorphic to a
homogeneous Poisson system. Writing $a:=p2^{-d/2}$,
\[
    S_\eta(\xi)=1-\frac{(\varphi*\varphi)(\xi)}{\rho}
    =1-ae^{-\pi\|\xi\|^2/2}.
\]
Since $|e^{-x}-(1-x)|\leq x^2/2$ for $x\geq0$,
\[
    \left|S_\eta(\xi)-(1-a)-\frac{a\pi}{2}\|\xi\|^2\right|
    \leq \frac{a\pi^2}{8}\|\xi\|^4.
\]
Proposition~\ref{prop:pointwise-to-integrated} gives
\[
    s=1-a>0,\qquad \alpha=2,\qquad \beta=4,\qquad
    c=\frac{a\pi d}{2(d+2)},\qquad
    L=\frac{a\pi^2 d}{8(d+4)}.
\]
Its translation action is therefore essentially free and CPE, so
Theorem~\ref{thm:cpe-supplies-F} applies.
\end{example}

\subsection{Inference from the limiting variables}
\label{subsec:intercept-inference}

Let $Y^{(\infty)}$ be the array from Corollary~\ref{cor:two-radius-limit}. For
$a\in\{r,\kappa r\}$,
\[
    \E Y^{(\infty)}_{a,j}=A_a,
    \qquad
    \tau_a^2:=\Var(Y^{(\infty)}_{a,j})
    =\frac{2}{\lambda_d(B_a)}
       \int_{B_a}S_\eta(\xi)^2\,d\lambda_d(\xi)-A_a^2,
\]
because $\E W_{a,j}=1$ and $\E W_{a,j}^2=2$. Put
\[
    \nu_r^2
    :=\frac{\kappa^{2\alpha}\tau_r^2+\tau_{\kappa r}^2}
            {(\kappa^\alpha-1)^2}.
\]
For an arbitrary data array $y$, define
\[
    \operatorname{CI}_r(y;\gamma)
    :=\left[
      \widehat s_r(y)-b_r-z_{1-\gamma/2}\operatorname{se}_r(y),
      \widehat s_r(y)+b_r+z_{1-\gamma/2}\operatorname{se}_r(y)
    \right]\cap[0,\infty).
\]

\begin{lemma}[Degenerate case]
\label{lem:degenerate}
Assume \eqref{eq:intro-integrated-model}, with $L>0$ and
$0<r\leq r_0/\kappa$. If $\nu_r^2=0$, then $S_\eta=0$ almost everywhere on $B_{\kappa r}$,
$s=s_r=0$, and every entry of $Y^{(\infty)}$ vanishes almost surely. Hence the
limiting rule never rejects and
$\operatorname{CI}_r(Y^{(\infty)};\gamma)=[0,b_r]$. Moreover, for every fixed
$m$,
\[
    \Pp\bigl(\operatorname{Rej}_r(Y^{(R)};\gamma)\bigr)\longrightarrow0,
    \qquad
    \Pp\bigl(s\in\operatorname{CI}_r(Y^{(R)};\gamma)\bigr)\longrightarrow1
\]
as $R\to\infty$.
\end{lemma}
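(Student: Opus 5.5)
Since $\kappa^\alpha>1$, the condition $\nu_r^2=0$ forces $\tau_r^2=\tau_{\kappa r}^2=0$. I would show that a zero variance forces $S_\eta$ to vanish. With $U$ uniform on $B_a$ and $W\sim\operatorname{Exp}(1)$ independent, conditioning on $U$ gives
\[
    \tau_a^2=\E\bigl[S_\eta(U)^2\bigr]+\Var\bigl(S_\eta(U)\bigr)\geq\E\bigl[S_\eta(U)^2\bigr],
\]
because $\Var(W)=1$. (If $\int_{B_{\kappa r}}S_\eta^2=\infty$, then $\tau^2_{\kappa r}=\infty$ and $\nu_r^2>0$, so that case is excluded.) Hence $\tau_{\kappa r}^2=0$ gives $S_\eta=0$ almost everywhere on $B_{\kappa r}$. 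In particular $A_\rho=0$ for all $0<\rho\leq\kappa r$, so $s_r=(\kappa^\alpha\cdot0-0)/(\kappa^\alpha-1)=0$.

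To obtain $s=0$, I use the model \eqref{eq:intro-integrated-model} at radii $\rho\in(0,\kappa r]$. There $0=A_\rho=s+c\rho^\alpha+\varepsilon_\rho$, and letting $\rho\downarrow0$ gives $s=\lim A_\rho=0$, since $c\rho^\alpha\to0$ and $|\varepsilon_\rho|\leq L\rho^\beta\to0$. The entries $Y^{(\infty)}_{a,j}=S_\eta(U_{a,j})W_{a,j}$ vanish almost surely because $U_{a,j}$ is absolutely continuous on $B_a\subset B_{\kappa r}$. For the all-zero array we get $\widehat s_r=0$ and $\operatorname{se}_r=0$. The rejection event $\{0>b_r\}$ is then empty, because $b_r>0$ when $L>0$. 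The interval is $[-b_r,b_r]\cap[0,\infty)=[0,b_r]$.

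For finite $R$, the statistic and the event are discontinuous at the zero array, so I would use Corollary~\ref{cor:two-radius-limit} directly rather than a continuous-mapping argument. Since $Y^{(R)}\dto0$, a constant limit, we have $Y^{(R)}\to0$ in probability. On the event $\{\max_{a,j}|Y^{(R)}_{a,j}|<\epsilon\}$ we get $|\widehat s_r(Y^{(R)})|\leq\epsilon(\kappa^\alpha+1)/(\kappa^\alpha-1)$. Choosing $\epsilon$ so that this is less than $b_r$ gives $\widehat s_r\leq b_r\leq b_r+z_{1-\gamma}\operatorname{se}_r$ whenever $z_{1-\gamma}\geq0$, so rejection fails. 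The case $\gamma>1/2$, where $z_{1-\gamma}<0$, also needs $\operatorname{se}_r$ to be small. This holds because $\operatorname{se}_r$ is a continuous function vanishing at $0$: I choose $\epsilon$ so that $|\widehat s_r|+|z_{1-\gamma}|\operatorname{se}_r<b_r$. Hence $\Pp(\operatorname{Rej}_r)\leq\Pp(\max|Y^{(R)}|\geq\epsilon)\to0$.

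For the confidence interval, $s=0\in\operatorname{CI}_r$ is equivalent to $\widehat s_r-b_r-z_{1-\gamma/2}\operatorname{se}_r\leq0$. The same small-$\epsilon$ event gives this. The only delicate point is this handling of the degenerate limit through the strict positivity of $b_r$, which is where $L>0$ is used.
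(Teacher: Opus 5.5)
Your proposal is correct and is essentially the paper's argument: zero limiting variance forces $S_\eta=0$ almost everywhere on $B_{\kappa r}$, and then $Y^{(R)}\to0$ in probability together with the strict margin $b_r>0$ keeps a whole neighbourhood of the zero array inside the non-rejection and coverage events. (For the first step you use $\tau_a^2\geq\E[S_\eta(U)^2]$, where the paper uses Jensen's $\tau_a^2\geq A_a^2$ together with $S_\eta\geq0$.)

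There are two small slips. First, the rule is not discontinuous at the zero array: $\widehat s_r$ is linear and $\operatorname{se}_r$ is continuous, which your own $\epsilon$-step relies on, and the paper simply notes that $g(y)=\widehat s_r(y)-b_r-z_{1-\gamma}\operatorname{se}_r(y)$ is continuous with $g(0)=-b_r<0$. Second, $0\in\operatorname{CI}_r$ is not equivalent to the lower-endpoint condition alone; it also needs $\widehat s_r+b_r+z_{1-\gamma/2}\operatorname{se}_r\geq0$. That can fail in general, but it holds on your event because there $|\widehat s_r|<b_r$.
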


\begin{proof}
Jensen's inequality gives $\tau_a^2\geq A_a^2$. Thus $\nu_r^2=0$ forces
$A_{\kappa r}=0$, and $S_\eta\geq0$ gives
$S_\eta=0$ almost everywhere on $B_{\kappa r}$. Hence $A_t=0$ for
$t\leq\kappa r$, so $s=s_r=0$ and the limiting array is zero. At fixed $m$,
$Y^{(R)}\dto0$ implies $Y^{(R)}\to0$ in probability. The continuous function
\[
    g(y):=\widehat s_r(y)-b_r-z_{1-\gamma}\operatorname{se}_r(y)
\]
satisfies $g(0)=-b_r<0$ because $L>0$, so the rejection probability tends to
zero. For coverage, the untruncated lower and upper endpoints are continuous
and equal $-b_r$ and $b_r$ at $y=0$; hence they straddle $s=0$ throughout a
neighborhood of the zero array, and the coverage probability tends to one.
\end{proof}

\begin{theorem}[Central limit theorem for the two-radius estimator]
\label{thm:two-scale-inference}
If $\int_{B_{\kappa r}}S_\eta^2\,d\lambda_d<\infty$ and $\nu_r^2>0$, then
\[
    \frac{\widehat s_r(Y^{(\infty)})-s_r}
         {\operatorname{se}_r(Y^{(\infty)})}
    \dto N(0,1)
    \qquad (m\to\infty).
\]
\end{theorem}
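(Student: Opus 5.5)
The argument is the classical CLT for a difference of two independent sample means, combined with Slutsky's theorem for the studentization. By Corollary~\ref{cor:two-radius-limit}, the limiting array $Y^{(\infty)}$ has mutually independent entries. Within each radius $a\in\{r,\kappa r\}$ the variables $Y^{(\infty)}_{a,j}=S_\eta(U_{a,j})W_{a,j}$ are i.i.d., with mean $A_a$ and variance $\tau_a^2$, and the two radius blocks are independent of each other. Finiteness of $\tau_a^2$ follows from $\E W^2=2$ and the hypothesis $\int_{B_{\kappa r}}S_\eta^2\,d\lambda_d<\infty$, since $B_r\subset B_{\kappa r}$. By linearity, $\E\widehat s_r(Y^{(\infty)})=(\kappa^\alpha A_r-A_{\kappa r})/(\kappa^\alpha-1)=s_r$ exactly, so there is no bias term to handle.

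\textbf{Step 1 (numerator).} Write
\[
  \sqrt m\bigl(\widehat s_r(Y^{(\infty)})-s_r\bigr)
  =\frac{1}{\kappa^\alpha-1}\Bigl(\kappa^\alpha\sqrt m(\bar Y_r-A_r)-\sqrt m(\bar Y_{\kappa r}-A_{\kappa r})\Bigr).
\]
The classical Lindeberg--L\'evy CLT, applied separately to each block, gives $\sqrt m(\bar Y_a-A_a)\dto N(0,\tau_a^2)$. When $\tau_a^2=0$ the limit is the point mass at zero, which is still correct by Chebyshev's inequality. Because the two blocks are independent, the two convergences hold jointly with independent limits (use characteristic functions). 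The continuous mapping theorem then gives $\sqrt m(\widehat s_r-s_r)\dto N(0,\nu_r^2)$.

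\textbf{Step 2 (denominator).} Finite variance and the strong law of large numbers give $v_a^2(Y^{(\infty)})\to\tau_a^2$ almost surely for each $a$. Hence
\[
  m\operatorname{se}_r^2(Y^{(\infty)})=\frac{\kappa^{2\alpha}v_r^2+v_{\kappa r}^2}{(\kappa^\alpha-1)^2}\longrightarrow\nu_r^2>0
\]
almost surely. Since $\nu_r^2>0$, the ratio $\sqrt m(\widehat s_r-s_r)/\sqrt{m\operatorname{se}_r^2}$ is well defined with probability tending to one, and Slutsky's theorem yields convergence to $N(0,1)$.

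There is no serious obstacle. The only delicate point is that one of $\tau_r^2$ or $\tau_{\kappa r}^2$ may vanish while $\nu_r^2>0$, so the two blocks cannot be studentized separately. The argument avoids this by normalizing only the combined statistic, which requires only that $\nu_r^2$ be strictly positive. On the null event $\{\operatorname{se}_r=0\}$, which has vanishing probability, the ratio can be assigned any fixed value.
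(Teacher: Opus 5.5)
Your proof is correct and takes essentially the paper's route: a CLT for the two independent radius blocks, pushed through the linear map $(x,y)\mapsto(\kappa^\alpha x-y)/(\kappa^\alpha-1)$, then consistency of the sample variances and Slutsky's theorem. The only cosmetic difference is that the paper applies the bivariate i.i.d.\ CLT to the pairs $(Y^{(\infty)}_{r,j},Y^{(\infty)}_{\kappa r,j})$ with diagonal covariance, whereas you combine two univariate CLTs via independence; your handling of a possibly vanishing $\tau_a^2$, by normalizing only the combined statistic, spells out a point the paper leaves implicit.
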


\begin{proof}
Apply the bivariate i.i.d. central limit theorem to
$(Y^{(\infty)}_{r,j},Y^{(\infty)}_{\kappa r,j})$. Its covariance matrix is
diagonal with entries $\tau_r^2$ and $\tau_{\kappa r}^2$. The linear map
$(x,y)\mapsto(\kappa^\alpha x-y)/(\kappa^\alpha-1)$ gives the numerator
limit, while the two sample variances in \eqref{eq:generic-two-radius} are
consistent. Slutsky's theorem completes the proof.
\end{proof}

\begin{corollary}[Fixed-radius inference]
\label{cor:fixed-radius-inference}
Assume $\int_{B_{\kappa r}}S_\eta^2\,d\lambda_d<\infty$ and $\nu_r^2>0$.
Then
\[
    \liminf_{m\to\infty}
    \Pp\!\left(
      \left|\widehat s_r(Y^{(\infty)})-s\right|
      < b_r+z_{1-\gamma/2}\operatorname{se}_r(Y^{(\infty)})
    \right)\geq1-\gamma.
\]
In particular,
$\liminf_{m\to\infty}\Pp(s\in\operatorname{CI}_r(Y^{(\infty)};\gamma))
\geq1-\gamma$.
If $s_r\leq b_r$, then
\[
    \liminf_{m\to\infty}
    \Pp\bigl(\widehat s_r(Y^{(\infty)})
      < b_r+z_{1-\gamma}\operatorname{se}_r(Y^{(\infty)})\bigr)
    \geq1-\gamma,
\]
whereas $s_r>b_r$ implies
\[
    \Pp\bigl(\operatorname{Rej}_r(Y^{(\infty)};\gamma)\bigr)\longrightarrow1.
\]
In particular, $H_0$ is contained in the first case, and $s>2b_r$ is
sufficient for power tending to one.
\end{corollary}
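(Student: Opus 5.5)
We derive everything from Theorem~\ref{thm:two-scale-inference}. Put
\[
    T_m:=\frac{\widehat s_r(Y^{(\infty)})-s_r}{\operatorname{se}_r(Y^{(\infty)})}\dto N(0,1).
\]
We also need $\operatorname{se}_r(Y^{(\infty)})\to0$ in probability. Indeed, $m\operatorname{se}_r^2\to\nu_r^2$ in probability by consistency of the sample variances, so $\operatorname{se}_r=O_P(m^{-1/2})$.

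\emph{Coverage.} Use the deterministic bound $|s_r-s|\leq b_r$ and the triangle inequality. On the event $\{|T_m|<z_{1-\gamma/2}\}$ we have
\[
    |\widehat s_r(Y^{(\infty)})-s|\leq|\widehat s_r-s_r|+|s_r-s|<z_{1-\gamma/2}\operatorname{se}_r+b_r.
\]
The limit law of $T_m$ is continuous, so $\Pp(|T_m|<z_{1-\gamma/2})\to1-\gamma$, and the $\liminf$ bound follows. On this event $s$ lies in the untruncated interval. Since $s\geq0$, intersecting with $[0,\infty)$ does not remove it, which gives the CI statement.

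\emph{Level when $s_r\leq b_r$.} Here
\[
    \{T_m<z_{1-\gamma}\}=\{\widehat s_r<s_r+z_{1-\gamma}\operatorname{se}_r\}\subseteq\{\widehat s_r<b_r+z_{1-\gamma}\operatorname{se}_r\},
\]
and $\Pp(T_m<z_{1-\gamma})\to1-\gamma$ by the Portmanteau theorem, because the boundary has null measure under $N(0,1)$. This gives the second display.

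\emph{Power when $s_r>b_r$.} Put $\delta:=s_r-b_r>0$. The rejection event is
\[
    \{\widehat s_r-s_r>-\delta+z_{1-\gamma}\operatorname{se}_r\}.
\]
The left side equals $T_m\operatorname{se}_r$, which is $O_P(1)\cdot o_P(1)=o_P(1)$. The right side converges in probability to $-\delta<0$. So the probability of rejection tends to $1$. This is really a law-of-large-numbers argument: $\widehat s_r\to s_r$ and $\operatorname{se}_r\to0$ in probability.

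\emph{Containment statements.} Under $H_0$ we have $s=0$, so $s_r\leq s+b_r=b_r$, which is the first case. If $s>2b_r$, then $s_r\geq s-b_r>b_r$, which is the second case.

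The only step needing care is $\operatorname{se}_r\to0$ in probability. It relies on the finite second moment assumption $\int_{B_{\kappa r}}S_\eta^2<\infty$, which gives $\tau_a^2<\infty$, together with the weak law of large numbers for the sample variances.
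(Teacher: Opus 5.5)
Your proof is correct and follows the paper's argument: coverage comes from the central limit theorem together with the deterministic bound $|s_r-s|\leq b_r$, power comes from consistency of $\widehat s_r$ and $\operatorname{se}_r\to0$, and the containment statements follow from $|s_r-s|\leq b_r$. The one difference is in the level claim, where your single inclusion $\{T_m<z_{1-\gamma}\}\subseteq\{\widehat s_r<b_r+z_{1-\gamma}\operatorname{se}_r\}$ replaces the paper's split into the cases $s_r<b_r$ (a fixed gap, so probability tending to one) and $s_r=b_r$ (limit exactly $1-\gamma$).
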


\begin{proof}
Theorem~\ref{thm:two-scale-inference} gives
$\Pp(|\widehat s_r-s_r|<z_{1-\gamma/2}\operatorname{se}_r)\to1-\gamma$.
Together with $|s_r-s|\leq b_r$, this gives the strict coverage bound and hence
the confidence-interval bound. If $s_r<b_r$, the fixed gap $b_r-s_r$ makes the strict nonrejection
probability tend to one; at $s_r=b_r$, the central limit theorem gives the
limit $1-\gamma$. If $s_r>b_r$, consistency of $\widehat s_r$ and
$\operatorname{se}_r\to0$ gives power tending to one. Finally,
$s_r\geq s-b_r$, so $s>2b_r$ implies $s_r>b_r$.
\end{proof}

\begin{remark}[Sampling error]
The two-radius cancellation increases sampling error. If $S_\eta$ is nearly
constant with value $s>0$ on $B_{\kappa r}$, then
$\tau_r^2\approx\tau_{\kappa r}^2\approx s^2$ and
\[
    \operatorname{se}_r(Y^{(\infty)})
    \approx \frac{s}{\sqrt m}
      \frac{\sqrt{\kappa^{2\alpha}+1}}{\kappa^\alpha-1}.
\]
Decreasing $r$ improves the bias bound $b_r$ but does not by itself reduce the
relative sampling error.
\end{remark}

\begin{remark}[Choice of $\kappa$]
The bias factor $D_{\alpha,\beta}(\kappa)$ diverges as $\kappa\downarrow1$ and
as $\kappa\to\infty$. For $(\alpha,\beta)=(1,2)$ it is minimized at
$\kappa=1+\sqrt2$, where $D_{1,2}=3+2\sqrt2$. Since the sampling variance also
depends on the unknown $S_\eta$, no choice of $\kappa$ is universally optimal.
\end{remark}

\begin{remark}[Misspecified exponent]
If the rule uses $\widetilde\alpha\neq\alpha$, the leading term is not
cancelled exactly: a residual of order $r^\alpha$ remains, so the $r^\beta$
bias bound and the stated level guarantee no longer apply.
\end{remark}

\begin{proof}[Proof of Theorem~\ref{thm:main}]
Fix $r$ and $m$. Corollary~\ref{cor:two-radius-limit} gives
$Y^{(R)}\dto Y^{(\infty)}$. If $\nu_r^2=0$, Lemma~\ref{lem:degenerate} gives
both fixed-radius claims. Suppose $\nu_r^2>0$. The sets
\[
    \{\widehat s_r<b_r+z_{1-\gamma}\operatorname{se}_r\},
    \qquad
    \{\widehat s_r>b_r+z_{1-\gamma}\operatorname{se}_r\}
\]
are open, so Portmanteau bounds their probabilities for $Y^{(R)}$ from below
by those for $Y^{(\infty)}$ as $R\to\infty$. Corollary~\ref{cor:fixed-radius-inference}
then gives the fixed-radius size and power statements. If $r_n\downarrow0$,
these bounds hold for every $n$; under a fixed $s>0$, one has
$b_{r_n}\to0$ and hence $s>2b_{r_n}$ eventually. This proves the two final
claims.
\end{proof}

\begin{corollary}[Confidence interval from the observed pattern]
\label{cor:observed-ci}
For every fixed $0<r\leq r_0/\kappa$,
\[
    \liminf_{m\to\infty}\liminf_{R\to\infty}
    \Pp\bigl(s\in\operatorname{CI}_r(Y^{(R)};\gamma)\bigr)\geq1-\gamma.
\]
\end{corollary}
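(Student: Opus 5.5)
The plan mirrors the proof of Theorem~\ref{thm:main}: pass from the observed array $Y^{(R)}$ to the limiting array $Y^{(\infty)}$ by a Portmanteau argument at fixed $m$, and then apply the fixed-radius inference for the limiting array. Fix $0<r\leq r_0/\kappa$ and $m\geq2$. Corollary~\ref{cor:two-radius-limit} gives $Y^{(R)}\dto Y^{(\infty)}$ as $R\to\infty$. We split according to whether $\nu_r^2=0$ or $\nu_r^2>0$.

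\emph{Degenerate case $\nu_r^2=0$.} Here Lemma~\ref{lem:degenerate} already gives
\[
    \Pp\bigl(s\in\operatorname{CI}_r(Y^{(R)};\gamma)\bigr)\to1
\]
for every fixed $m$, so the claim holds trivially.

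\emph{Case $\nu_r^2>0$.} The square-integrability hypothesis on $S_\eta$ is part of the standing assumptions of Theorem~\ref{thm:main}, and we use it here as well. Consider the set
\[
    O:=\bigl\{y:\ |\widehat s_r(y)-s|<b_r+z_{1-\gamma/2}\operatorname{se}_r(y)\bigr\}.
\]
The maps $y\mapsto\widehat s_r(y)$ and $y\mapsto\operatorname{se}_r(y)$ are continuous on the space of arrays, because $\operatorname{se}_r$ is the square root of a continuous nonnegative function. Hence $O$ is open.

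On $O$ the point $s$ lies in the untruncated interval. Since $s\geq0$, intersecting with $[0,\infty)$ does not remove $s$, so $O\subset\{s\in\operatorname{CI}_r(\cdot;\gamma)\}$. Portmanteau then gives
\[
    \liminf_{R\to\infty}\Pp\bigl(s\in\operatorname{CI}_r(Y^{(R)};\gamma)\bigr)
    \geq\liminf_{R\to\infty}\Pp(Y^{(R)}\in O)
    \geq\Pp(Y^{(\infty)}\in O).
\]
Taking $\liminf_{m\to\infty}$ and invoking the strict coverage bound of Corollary~\ref{cor:fixed-radius-inference} yields $\geq1-\gamma$.

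The only delicate point is the reason for working with the open strict-inequality event $O$ rather than the closed interval itself. The interval event is closed, and Portmanteau only gives an upper bound for closed sets. Corollary~\ref{cor:fixed-radius-inference} was stated with strict inequality precisely so that this lower bound is available, so no further work is needed.
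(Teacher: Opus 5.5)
Your proposal is correct and follows essentially the same route as the paper. The degenerate case $\nu_r^2=0$ is handled by Lemma~\ref{lem:degenerate}; otherwise the open strict-coverage event $\{|\widehat s_r-s|<b_r+z_{1-\gamma/2}\operatorname{se}_r\}$, which lies inside $\{s\in\operatorname{CI}_r\}$, is passed through Portmanteau at fixed $m$ and then bounded below using the strict coverage statement of Corollary~\ref{cor:fixed-radius-inference}, exactly as the paper does.
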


\begin{proof}
For $\nu_r^2=0$, use Lemma~\ref{lem:degenerate}. Otherwise the event
\[
    \left\{\left|\widehat s_r-s\right|
      < b_r+z_{1-\gamma/2}\operatorname{se}_r\right\}
\]
is open and implies $s\in\operatorname{CI}_r$. Portmanteau at fixed $m$ and
the strict coverage bound in Corollary~\ref{cor:fixed-radius-inference} give
the claim.
\end{proof}

\section{Variants and diagnostics}
\label{sec:variants}

\subsection{Sensitivity to the remainder bound}
\label{subsec:L-sensitivity}

Since $b_r=L D_{\alpha,\beta}(\kappa)r^\beta$, the rule rejects exactly when
\[
    L<L_{\max}(y)
    :=\frac{[\widehat s_r(y)-z_{1-\gamma}\operatorname{se}_r(y)]_+}
            {D_{\alpha,\beta}(\kappa)r^\beta},
    \qquad [x]_+:=\max\{x,0\}.
\]
We report $L_{\max}$ as a sensitivity summary: it shows how large the remainder
constant can be before the decision changes. It does not estimate $L$, and
taking $L$ below the true remainder constant voids the level guarantee.

\subsection{A direct periodogram for CPE processes}
\label{subsec:ball-implementation}

For the CPE processes of Theorem~\ref{thm:cpe-supplies-F},
\cite[Theorem~6.1 and Corollaries~6.2--6.3]{Bjo26} also gives the Fourier
limit for the unweighted sum over $B_R$. The raw periodogram
(or scattering intensity)
\[
    I_R(u):=\frac1{\eta(B_R)}
      \left|\sum_{x\in\eta\cap B_R}\chi_u(x)\right|^2,
\]
with $I_R(u)=0$ when $\eta(B_R)=0$, therefore replaces $I_{R,h}$. For every
fixed finite frequency tuple in a set of full Lebesgue measure,
\[
    (I_R(u_1),\ldots,I_R(u_m))
    \dto
    (S_\eta(u_1)W_1,\ldots,S_\eta(u_m)W_m).
\]
Conditioning on absolutely continuous random frequencies gives the analogue of
Theorem~\ref{thm:observed-randomized-limit}, so the two-radius rule applies with
$Y^{(R)}_{a,j}:=I_R(U_{a,j})$. No auxiliary test function or separate intensity
estimate is needed.

\section{A design remark}
\label{sec:design}

Assume the stronger finite-family Fourier limit supplied by
\cite[Theorem~5.1 and Corollary~5.5]{Bjo26} under the hypotheses of
Theorem~\ref{thm:cpe-supplies-F}. With a fixed budget of Fourier coefficients, one test function per sampled frequency minimizes the variance in
the limiting model. To see this, let $U$ be uniform on $B_r$ and put
\[
    M_{2,r}:=\E[S_\eta(U)^2],
    \qquad
    V_r:=\Var(S_\eta(U)).
\]
Let $h_1,\ldots,h_K\in C_c^\infty(B_1;\bC)$ be orthonormal in
$L^2(\lambda_d)$. Properness and orthogonality make their Gaussian limits at a
fixed frequency independent. Averaging the squared limits replaces $W$ by
$\overline W_K=K^{-1}\sum_{\ell=1}^K W_\ell$, where
$\E\overline W_K=1$ and $\E\overline W_K^2=1+K^{-1}$. If $m$ frequencies are
sampled and $J=mK$ is the total number of Fourier coefficients, the sample mean
of $S_\eta(U_j)\overline W_{K,j}$ has variance
\[
    \frac{K V_r+M_{2,r}}{J}.
\]
Hence $K=1$ minimizes the variance when $V_r>0$, while all allocations agree
when $V_r=0$. Nonuniform frequency sampling can also change the variance, but
its oracle choice depends on the unknown structure factor and is not pursued
here.

\section{Conclusion}
\label{sec:conclusion}

Randomizing frequencies turns the almost-everywhere Fourier limit into
inference for the intrinsic Bartlett averages $A_r$. The two-radius
extrapolation removes the prescribed leading term and gives an explicit bias
allowance, yielding confidence intervals and a test of asymptotic level at
most $\gamma$ from one observed point pattern. Joint regimes $r=r(R)$ or $m=m(R)$,
finite-$R$ calibration, and estimation or validation of the low-frequency
parameters require additional input.

\subsection*{Acknowledgments}
The author thanks G\"unter Last for encouraging discussions on his paper with
Henze and Klatt.

\section*{Statements and Declarations}

\medskip\noindent\textbf{Funding.}\ 
This work was supported by the Swedish Research Council under grant
VR 11253322.

\medskip\noindent\textbf{Competing interests.}\ 
The author has no relevant financial or non-financial interests to disclose.

\medskip\noindent\textbf{Data availability.}\ 
No datasets were generated or analysed during the current study.

\medskip\noindent\textbf{Use of artificial-intelligence tools.}\ 
The author used ChatGPT (OpenAI) for editorial assistance, consistency checks,
and LaTeX maintenance, and Claude (Anthropic) and Aristotle (Harmonic) for
independent manuscript and proof audits. Aristotle was also used for isolated
algebraic and statistical checks. All outputs were independently evaluated by the
author, who takes full responsibility for the mathematical content, references,
and final text.

\end{document}